\documentclass[11pt]{article}
\usepackage[margin=1in]{geometry}
\usepackage{amsmath,amssymb,amsthm,mathtools,bm}
\usepackage{booktabs}
\usepackage{graphicx}
\usepackage{xcolor}
\usepackage[numbers,sort&compress]{natbib}
\usepackage[colorlinks=true,linkcolor=blue!50!black,citecolor=blue!50!black,urlcolor=blue!50!black]{hyperref}
\usepackage{enumitem}

\newtheorem{theorem}{Theorem}[section]
\newtheorem{proposition}[theorem]{Proposition}
\newtheorem{corollary}[theorem]{Corollary}

\theoremstyle{definition}
\newtheorem{assumption}{Assumption}

\newtheorem{example}[theorem]{Example}
\theoremstyle{remark}
\newtheorem{remark}[theorem]{Remark}

\newcommand{\R}{\mathbb{R}}
\newcommand{\E}{\mathbb{E}}
\newcommand{\tr}{\operatorname{tr}}
\newcommand{\dv}{\nabla\!\cdot\!}
\newcommand{\logit}{\operatorname{logit}}

\title{Regression Functions as Fields on Covariate Space:\\
Evolution Equations, the Hessian, and Identification}
\author{Nobuaki Tanaka}
\date{October 2026}

\begin{document}
\maketitle

\begin{abstract}
For each time $t$, the regression function $m_t(x)=\E[Y_t\mid X_t=x]$ of an outcome on contemporaneous covariates can be regarded as a field on covariate space. We study how this field evolves when the covariates of individual units move as a diffusion and the outcome attached to each unit changes according to its own dynamics. Under mild regularity we show that $m_t$ satisfies a second-order evolution equation whose diffusion term is $\tfrac12\tr(\Sigma\nabla^2 m_t)$, the trace of the Hessian of the regression function weighted by the instantaneous covariance $\Sigma$ of the covariate increments, whose transport term is the drift of the \emph{time-reversed} covariate process, and whose source term is the conditional mean rate of change of the outcome. The equation is a regression-theoretic reading of a classical transport law, and it yields a Feynman--Kac representation of $m_t$ along reversed covariate paths. We connect the Hessian appearing in the equation to the Hessian estimated by quadratic regression (under Gaussian design the best quadratic approximation has Hessian $\E[\nabla^2 m(X)]$ exactly) and to the leading bias of kernel smoothers. For binary outcomes governed by bistable replicator dynamics with imitation among units with similar covariates, the closed equation is of Nagumo type, its planar travelling fronts are exactly logistic in $(x,t)$, and a single logistic regression of the outcome on covariates and time identifies the ratio of diffusion to reaction and the direction of invasion. We then ask whether spread in covariate space reflects fluctuating unit characteristics or imitation among similar units. Imitation within a covariate neighbourhood enters the equation through exactly the same operator as covariate fluctuation, so the two are observationally equivalent in the regression field alone; panel data on covariates break the equivalence, because they identify the fluctuation covariance directly. We show that the two mechanisms cannot be separated on a travelling front, propose a split-sample instrumental-variables test that works on non-front (e.g.\ seeded) profiles, and evaluate its size and power in agent-based experiments. We also discuss identification under panel versus repeated cross-sectional designs.
\end{abstract}

\medskip
\noindent\textbf{Keywords:} regression function; reaction--diffusion equation; Hessian; time reversal of diffusions; Feynman--Kac formula; replicator dynamics; imitation; logistic regression; local polynomial regression; instrumental variables.

%=====================================================================
\section{Introduction}\label{sec:intro}

A common way to add spatial structure to a dynamical model is to append a diffusion term to an ordinary differential equation, turning $\dot y = R(y)$ into a reaction--diffusion equation $\partial_t y = \kappa\Delta y + R(y)$ on a physical domain. This paper takes the same step with a different notion of space: the space of \emph{covariates} of a regression.

Fix a time $t$ and regress an outcome $Y_t$ on contemporaneous covariates $X_t\in\R^d$. The regression function
\[
  m_t(x)=\E[Y_t\mid X_t=x]
\]
is a field on covariate space. As $t$ varies, the covariates of individual units drift and fluctuate, and the outcome attached to each unit changes. The question we ask is: \emph{what equation does $m_t$ satisfy as a function of $(t,x)$?}

Our main result (Theorem~\ref{thm:main}) answers this under a diffusion model for covariates. The equation contains three terms with distinct interpretations:
\begin{enumerate}[label=(\roman*),leftmargin=2em]
  \item a diffusion term $\tfrac12\tr(\Sigma\nabla^2 m_t)$, where $\nabla^2 m_t$ is the Hessian of the regression function and $\Sigma$ the covariance rate of covariate increments; for $\Sigma=2\kappa I$ it is $\kappa\Delta m_t=\kappa\tr\nabla^2 m_t$;
  \item a transport term $\bar b\cdot\nabla m_t$, where $\bar b$ is the drift of the time-reversed covariate process; it contains a correction $\Sigma\nabla\log f_t$ involving the covariate density $f_t$;
  \item a source term equal to the conditional mean rate of change of the outcome.
\end{enumerate}

The diffusion coefficient is not chosen by the analyst: it is the covariance of covariate increments, which removes the arbitrariness of choosing a metric on a space whose coordinates carry different units. The equation is invariant under affine changes of covariates.

\paragraph{Relation to known results.}
Mathematically, the evolution equation is a regression-theoretic reading of classical facts. The product $q_t=m_tf_t$ obeys a conservation law, and $m_t$ plays the role of a mass fraction in species transport; the transport coefficient is the reverse-time drift of \citet{Anderson1982} and \citet{HaussmannPardoux1986}; and the same structure underlies the partial differential equation satisfied by the denoiser $\E[x_0\mid x_t=x]$ in diffusion generative models \citep{Daras2023}. Backward Kolmogorov equations for conditional expectations of future quantities are standard in option pricing \citep{LongstaffSchwartz2001} and in fate mapping of single cells \citep{Lange2022}. Forward equations for the \emph{density} of agents in a state space are central in heterogeneous-agent economics \citep{Gabaix2016,Achdou2022} and in trajectory inference from single-cell snapshots \citep{Weinreb2018,Schiebinger2019}, and replicator--mutator equations place diffusion on a continuous trait space \citep{Kimura1965,AlfaroCarles2017}. Section~\ref{sec:related} discusses these connections. To our knowledge, however, the regression function of an outcome on contemporaneous covariates has not been treated systematically as the unknown of such an equation, together with the resulting questions of interpretation, identification and estimation that are natural in regression analysis. That is the purpose of this paper.

\paragraph{Contributions.}
\begin{enumerate}[leftmargin=2em]
  \item An evolution equation for $m_t$ with explicit assumptions, including state-dependent diffusion and correlated outcome noise, and its Feynman--Kac representation along time-reversed covariate paths (Section~\ref{sec:main}).
  \item An interpretation of the Hessian term in regression terms: the Hessian of the best quadratic approximation under Gaussian design equals $\E[\nabla^2 m(X)]$, and the population Nadaraya--Watson target with Gaussian kernel solves the evolution equation with bandwidth playing the role of time (Section~\ref{sec:hessian}).
  \item Invariance of $[0,1]$ for closed equations with reaction terms vanishing at $0$ and $1$, via the sign of the Hessian at extrema (Section~\ref{sec:invariance}).
  \item For bistable replicator dynamics with imitation in covariate space, an exact logistic-regression identification of the diffusion--reaction ratio and of the invading strategy (Section~\ref{sec:replicator}).
  \item Imitation in covariate space acts through the same operator as covariate fluctuation; the two are identified separately from panel data, but not on a travelling front. We give a split-sample instrumental-variables test of imitation and evaluate it in agent-based experiments (Section~\ref{sec:imitation}).
  \item A discussion of identification under panel and repeated cross-sectional designs and a two-stage estimation procedure (Section~\ref{sec:estimation}), with simulations (Section~\ref{sec:simulations}).
\end{enumerate}

%=====================================================================
\section{Setting}\label{sec:setting}

Consider a large population of units. Unit $i$ carries a covariate process $X^i_t\in\R^d$ and an outcome process $Y^i_t\in\R$, $t\in[0,T]$. We describe a generic unit and drop the superscript.

\begin{assumption}[Covariate dynamics]\label{ass:cov}
$X_t$ solves the Itô equation
\[
  dX_t=b(t,X_t)\,dt+\sigma(t,X_t)\,dW_t ,
\]
with $\Sigma=\sigma\sigma^\top$. The coefficients are smooth, and $X_t$ has a density $f_t(x)$ that is $C^{1,2}$ and strictly positive on an open set $\mathcal X\subseteq\R^d$.
\end{assumption}

\begin{assumption}[Outcome dynamics]\label{ass:out}
The pair $(X_t,Y_t)$ is Markov with generator $\mathcal L_X+\mathcal A_x+\mathcal C$, where
\begin{itemize}[leftmargin=1.5em]
  \item $\mathcal L_X\varphi=b\cdot\nabla_x\varphi+\tfrac12\tr(\Sigma\nabla_x^2\varphi)$ acts on $x$;
  \item for each fixed $x$, $\mathcal A_x$ is the generator of the outcome dynamics acting on $y$ (an ordinary differential equation, a diffusion, or a jump process);
  \item $\mathcal C\psi=c(t,x,y)\cdot\nabla_x\partial_y\psi$ is a cross term with $c=d\langle X,Y\rangle_t/dt$; $c\equiv0$ when the noise of $Y$ is independent of $W$.
\end{itemize}
The function $\iota(y)=y$ is in the extended domain of $\mathcal A_x$, and
\[
  G(t,x):=\E\bigl[(\mathcal A_{X_t}\iota)(Y_t)\mid X_t=x\bigr],\qquad
  \bar c(t,x):=\E\bigl[c(t,X_t,Y_t)\mid X_t=x\bigr]
\]
are well defined. For an ordinary differential equation $dY_t=F(t,X_t,Y_t)\,dt$, $G(t,x)=\E[F(t,X_t,Y_t)\mid X_t=x]$.
\end{assumption}

\begin{assumption}[Regularity]\label{ass:reg}
$\E|Y_t|^2<\infty$; the map $(t,x)\mapsto q_t(x):=\E[Y_t\mid X_t=x]f_t(x)$ is $C^{1,2}$ on $(0,T)\times\mathcal X$; boundary terms in the integrations by parts below vanish (e.g.\ $\mathcal X=\R^d$ with sufficient decay, or zero-flux conditions on $\partial\mathcal X$).
\end{assumption}

The object of interest is the regression function $m_t(x)=q_t(x)/f_t(x)$ for $x\in\mathcal X$. The outcome need not be a deterministic function of the covariates: the conditional variance $\operatorname{Var}(Y_t\mid X_t)$ is arbitrary. We use the divergence of a matrix field row-wise, $(\dv\Sigma)_i=\sum_j\partial_j\Sigma_{ij}$.

%=====================================================================
\section{The evolution equation}\label{sec:main}

\begin{theorem}\label{thm:main}
Under Assumptions~\ref{ass:cov}--\ref{ass:reg}, on $(0,T)\times\mathcal X$,
\begin{equation}\label{eq:main}
  \partial_t m_t
  =\tfrac12\tr\bigl(\Sigma\,\nabla^2 m_t\bigr)
  +\bar b_t\cdot\nabla m_t
  +G_t
  -\frac{1}{f_t}\dv\bigl(f_t\,\bar c_t\bigr),
\end{equation}
where
\begin{equation}\label{eq:reversedrift}
  \bar b_t(x)=\Sigma(t,x)\nabla\log f_t(x)+(\dv\Sigma)(t,x)-b(t,x).
\end{equation}
\end{theorem}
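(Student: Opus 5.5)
The plan is to derive a weak-form evolution equation for $q_t=m_tf_t$ and for $f_t$, and then obtain the equation for $m_t=q_t/f_t$ by the quotient rule.

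\textbf{Step 1: weak equations for $q_t$ and $f_t$.} Fix a test function $\varphi\in C_c^\infty(\mathcal X)$ and set $\psi(x,y)=y\varphi(x)$. By Itô's formula, or Dynkin's formula for the generator $\mathcal L_X+\mathcal A_x+\mathcal C$ (with $\iota$ in the extended domain and $\E|Y_t|^2<\infty$ so that local martingales are true martingales after localisation),
\[
\frac{d}{dt}\E[Y_t\varphi(X_t)]=\E\bigl[Y_t\,\mathcal L_X\varphi(X_t)+\varphi(X_t)(\mathcal A_{X_t}\iota)(Y_t)+c\cdot\nabla\varphi(X_t)\bigr].
\]
Conditioning on $X_t$ turns each term into an integral against $f_t$:
\[
\frac{d}{dt}\int q_t\varphi=\int q_t\,\mathcal L_X\varphi+\int f_tG_t\varphi+\int f_t\bar c_t\cdot\nabla\varphi .
\]
Integrating by parts, with boundary terms vanishing by Assumption~\ref{ass:reg}, gives the strong form
\[
\partial_tq=\mathcal L^*q+fG-\dv(f\bar c),\qquad \mathcal L^*q=-\dv(bq)+\tfrac12\sum_{ij}\partial_{ij}(\Sigma_{ij}q).
\]
The same computation with $\psi=\varphi(x)$ gives the Fokker--Planck equation $\partial_tf=\mathcal L^*f$.

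\textbf{Step 2: quotient rule.} We have $\partial_tm=(\partial_tq-m\,\partial_tf)/f=\bigl(\mathcal L^*(mf)-m\mathcal L^*f\bigr)/f+G-f^{-1}\dv(f\bar c)$. The key computation is the commutator $\mathcal L^*(mf)-m\mathcal L^*f$:
\begin{itemize}[leftmargin=1.5em]
\item The drift part contributes $-fb\cdot\nabla m$.
\item The second-order part, by the Leibniz rule $\partial_{ij}(\Sigma_{ij}fm)=m\,\partial_{ij}(\Sigma_{ij}f)+2\,\partial_i m\,\partial_j(\Sigma_{ij}f)+\Sigma_{ij}f\,\partial_{ij}m$, contributes $\nabla m\cdot\dv(\Sigma f)+\tfrac12 f\tr(\Sigma\nabla^2m)$. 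This uses the symmetry of $\Sigma$ to combine the two cross terms.
\item Next, $\dv(\Sigma f)=f\dv\Sigma+\Sigma\nabla f$, row-wise.
\end{itemize}
Dividing by $f$ yields $\tfrac12\tr(\Sigma\nabla^2m)+(\Sigma\nabla\log f+\dv\Sigma-b)\cdot\nabla m$, which is exactly $\bar b_t$ as in \eqref{eq:reversedrift}. Positivity of $f_t$ on $\mathcal X$ justifies the division, and the $C^{1,2}$ regularity of $q$ and $f$ makes $m$ regular enough for the pointwise computation.

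\textbf{Main obstacle.} The algebra is routine. The delicate step is Step 1: justifying the differentiation under the expectation and the martingale property for $Y_t\varphi(X_t)$ when $\mathcal A_x$ is a general, possibly jump, generator and $\iota$ is only in the extended domain. I would handle this by localising with stopping times, using the $L^2$ bound on $Y_t$ and the compact support of $\varphi$ to pass to the limit. I would then identify the conditional expectations via the tower property, and upgrade the weak (distributional) equation to a pointwise one using the assumed $C^{1,2}$ regularity of $q_t$ and $f_t$.
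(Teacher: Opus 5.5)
Your proposal follows the paper's proof essentially line by line: Dynkin's formula with $\psi=\varphi(x)y$, then conditioning on $X_t$ and integrating by parts to get the conservation law for $q_t$ and the Fokker--Planck equation for $f_t$, and finally the quotient rule with the commutator $\mathcal L_X^{*}(mf)-m\,\mathcal L_X^{*}f$ producing $\tfrac12\tr(\Sigma\nabla^2 m)+\bar b\cdot\nabla m$. The only quibble is minor: your Leibniz display with the factor $2$ holds only after summing over $i,j$ and using the symmetry of $\Sigma$, which you already indicate.
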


\begin{proof}
Let $\varphi\in C_c^\infty(\mathcal X)$ and $\psi(x,y)=\varphi(x)y$. By Dynkin's formula,
\[
  \frac{d}{dt}\E[\varphi(X_t)Y_t]
  =\E\bigl[(\mathcal L_X\varphi)(X_t)\,Y_t+\varphi(X_t)(\mathcal A_{X_t}\iota)(Y_t)+c\cdot\nabla\varphi(X_t)\bigr].
\]
Conditioning on $X_t$ and writing the expectations as integrals against $f_t$ gives
\[
  \frac{d}{dt}\int\varphi\,q_t=\int(\mathcal L_X\varphi)\,q_t+\int\varphi\,f_tG_t+\int f_t\bar c_t\cdot\nabla\varphi .
\]
Integrating by parts, $q_t$ is a weak, hence by Assumption~\ref{ass:reg} classical, solution of
\begin{equation}\label{eq:q}
  \partial_t q_t=\mathcal L_X^{*}q_t+f_tG_t-\dv(f_t\bar c_t),\qquad
  \mathcal L_X^{*}u=-\dv(bu)+\tfrac12\textstyle\sum_{i,j}\partial_{ij}(\Sigma_{ij}u).
\end{equation}
The same argument with $\psi(x,y)=\varphi(x)$ gives the Fokker--Planck equation $\partial_t f_t=\mathcal L_X^{*}f_t$. Since $m_t=q_t/f_t$,
\[
  \partial_t m_t=\frac{\mathcal L_X^{*}(m_tf_t)-m_t\,\mathcal L_X^{*}f_t}{f_t}+G_t-\frac{1}{f_t}\dv(f_t\bar c_t).
\]
Expanding, $-\dv(bmf)+m\dv(bf)=-f\,b\cdot\nabla m$ and
\[
  \tfrac12\sum_{i,j}\bigl[\partial_{ij}(\Sigma_{ij}mf)-m\,\partial_{ij}(\Sigma_{ij}f)\bigr]
  =\tfrac12 f\tr(\Sigma\nabla^2 m)+\sum_{i,j}\partial_i m\,\partial_j(\Sigma_{ij}f).
\]
Dividing by $f$ and using $\partial_j(\Sigma_{ij}f)/f=(\Sigma\nabla\log f)_i+(\dv\Sigma)_i$ gives \eqref{eq:main}. (We also verified the algebra symbolically for $d=2$ with general $\Sigma(x)$ and $b(x)$.)
\end{proof}

\begin{remark}[The transport term is the reverse-time drift]\label{rem:reverse}
The vector field $\bar b_t$ in \eqref{eq:reversedrift} is the drift of the time reversal $\bar X_s=X_{T-s}$ of the covariate diffusion \citep{Anderson1982,HaussmannPardoux1986}. Equation~\eqref{eq:main} is therefore a backward Kolmogorov equation for the reversed process. This yields the following representation, which makes the equation intuitive: \emph{the regression function at $(t,x)$ averages the history of the units that are at $x$ at time $t$}.
\end{remark}

\begin{corollary}[Feynman--Kac representation]\label{cor:fk}
Suppose $dY_t=g(t,X_t)\,dt$ for a deterministic function $g$. Then
\[
  m_t(x)=\E\Bigl[m_0(X_0)+\int_0^t g(s,X_s)\,ds\ \Bigm|\ X_t=x\Bigr],
\]
and conditionally on $X_t=x$ the path $(X_s)_{s\le t}$ is the time-reversed diffusion started at $x$, with drift $\bar b$ and diffusion matrix $\Sigma$.
\end{corollary}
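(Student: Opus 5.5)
The plan is to prove the corollary by the most elementary route and then use the theorem for the interpretation. With $dY_t=g(t,X_t)\,dt$, integrating pathwise gives
\[
Y_t=Y_0+\int_0^t g(s,X_s)\,ds .
\]
Conditioning on $X_t=x$ gives
\[
m_t(x)=\E[Y_0\mid X_t=x]+\E\Bigl[\int_0^t g(s,X_s)\,ds\Bigm| X_t=x\Bigr].
\]
The only nontrivial point is to replace $Y_0$ by $m_0(X_0)$. This requires that, given $X_0$, the initial outcome $Y_0$ carries no further information about the future covariate path. Under Assumption~\ref{ass:out} this holds: $(X,Y)$ is Markov, and the $X$-component has its own generator $\mathcal L_X$ not depending on $y$. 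Here $c\equiv0$, since $Y$ has no martingale part. Hence $(X_s)_{s\ge0}$ is conditionally independent of $Y_0$ given $X_0$. The tower property then gives
\[
\E[Y_0\mid X_t]=\E\bigl[\E[Y_0\mid X_0,X_t]\mid X_t\bigr]=\E[m_0(X_0)\mid X_t],
\]
which yields the displayed formula.

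For the second claim I would invoke the Anderson/Haussmann--Pardoux time-reversal theorem, already cited in Remark~\ref{rem:reverse}. Under the smoothness and positivity in Assumption~\ref{ass:cov}, plus the finite-entropy type integrability those authors require, $\bar X_s=X_{t-s}$ is a diffusion with drift $\bar b(t-s,\cdot)$ from \eqref{eq:reversedrift} and diffusion matrix $\Sigma(t-s,\cdot)$. Disintegrating its law with respect to the initial value $\bar X_0=X_t$ shows that, for $f_t$-a.e.\ $x$, the conditional law of $(X_s)_{s\le t}$ given $X_t=x$ is that of the reversed diffusion started at $x$. Some care is needed here. One should use that the reversed process is Markov with respect to its own filtration, so its conditional law given the starting point is the law of the SDE started there, provided that SDE is well posed. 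I would add this well-posedness as a standing regularity requirement, e.g.\ local Lipschitz $\bar b$ on $\mathcal X$ and non-explosion.

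As a consistency check, and as an alternative PDE proof, I would compare with Theorem~\ref{thm:main}. With $c\equiv0$ and $G=g$, \eqref{eq:main} reads
\[
\partial_t m=\tfrac12\tr(\Sigma\nabla^2m)+\bar b\cdot\nabla m+g .
\]
In reversed time $s\mapsto t-s$ this is a backward Kolmogorov equation for the generator $\bar b\cdot\nabla+\tfrac12\tr(\Sigma\nabla^2)$ with source $g$. The classical Feynman--Kac formula for the reversed diffusion, with terminal datum $m_0$ and running source $g$, then reproduces the formula.

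The main obstacle is the rigorous time-reversal step, namely the hypotheses making $\bar X$ a well-posed diffusion whose conditional law given $X_t=x$ is the SDE law pointwise rather than just a.e. I would state the identity for $f_t$-a.e.\ $x$ and rely on the continuity of $m_t$ from Assumption~\ref{ass:reg} to upgrade it to every $x\in\mathcal X$.
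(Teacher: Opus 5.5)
Your proposal is correct and follows the paper's proof: you integrate $dY_t=g\,dt$ pathwise, use $\E[Y_0\mid X_0,X_t]=\E[Y_0\mid X_0]=m_0(X_0)$ with the tower property, and cite the Anderson/Haussmann--Pardoux time-reversal theorem for the second claim. Two things differ slightly. The paper simply imposes the conditional independence of $Y_0$ and the covariate noise given $X_0$ as a side condition, whereas you derive it from the Markov structure of Assumption~\ref{ass:out} with $c\equiv0$; that derivation implicitly uses uniqueness for the martingale problem of $\mathcal L_X$. Your PDE consistency check via Theorem~\ref{thm:main} is an addition that the paper does not include.
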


\begin{proof}
$Y_t=Y_0+\int_0^tg(s,X_s)ds$ and $\E[Y_0\mid X_0,X_t]=\E[Y_0\mid X_0]=m_0(X_0)$ when $Y_0$ is conditionally independent of the covariate noise given $X_0$. The second statement is the time-reversal theorem.
\end{proof}

\begin{remark}[Hessian and Laplacian]
For $\Sigma=2\kappa I$, the diffusion term is $\kappa\Delta m_t=\kappa\tr\nabla^2 m_t$. The identification of the Laplacian with the trace of the Hessian is a definition; what Theorem~\ref{thm:main} adds is that the weight matrix is the covariance rate of covariate increments. Under an affine change of covariates $x'=Ax+a$, we have $\Sigma'=A\Sigma A^\top$ and $\nabla'^2m=A^{-\top}(\nabla^2m)A^{-1}$, so $\tr(\Sigma'\nabla'^2m)=\tr(\Sigma\nabla^2m)$. No metric on covariate space has to be chosen.
\end{remark}

\begin{remark}[Mass-fraction form]
Equation~\eqref{eq:q} is a conservation law for $q_t=m_tf_t$, and $m_t$ is the ratio of a conserved quantity to the density $f_t$, exactly like the mass fraction of a species transported in a fluid of density $f_t$. In this sense, the conserved ``distribution'' on covariate space is $q_t$, not $m_t$; the term $\Sigma\nabla\log f_t$ arises from the division by $f_t$.
\end{remark}

\begin{example}[Special cases]\label{ex:special}
\begin{enumerate}[label=(\alph*),leftmargin=2em]
  \item \emph{Heat equation.} If $b=0$, $\Sigma=2\kappa I$, $\nabla f_t=0$ on the region considered, $G=0$ and $c=0$, then $\partial_t m_t=\kappa\Delta m_t$.
  \item \emph{Stationary Ornstein--Uhlenbeck covariates.} If $dX=-\Theta X\,dt+\sigma dW$ with stationary covariance $V$ satisfying $\Theta V+V\Theta^\top=\Sigma$, then $\nabla\log f=-V^{-1}x$ and $\bar b(x)=-V\Theta^\top V^{-1}x$, the drift of the reversed Ornstein--Uhlenbeck process. In one dimension $\bar b(x)=-\theta x$.
  \item \emph{Linear outcome dynamics.} If $dY_t=(\alpha(X_t)-\lambda Y_t)\,dt$, then $G_t=\alpha-\lambda m_t$ and \eqref{eq:main} is a closed linear equation for $m_t$.
\end{enumerate}
\end{example}

\paragraph{Closure.}
In general, $G_t$ depends on the conditional law of $Y_t$ given $X_t$ and not only on $m_t$. Equation~\eqref{eq:main} is closed in $m_t$ when the outcome dynamics are linear in $y$ (Example~\ref{ex:special}(c)) or, for binary outcomes, when transition rates depend on $x$ and on the local frequency $m_t(x)$ only (Section~\ref{sec:replicator}). Without closure, \eqref{eq:main} remains an exact identity linking estimable quantities, which is how it is used in Section~\ref{sec:estimation}.

%=====================================================================
\section{The Hessian in regression}\label{sec:hessian}

The Hessian $\nabla^2 m$ that drives \eqref{eq:main} also appears in two familiar places in regression: quadratic regression and kernel smoothing.

\subsection{Best quadratic approximation under Gaussian design}

Consider the quadratic model $m(x)\approx c+J^\top(x-\mu)+\tfrac12(x-\mu)^\top H(x-\mu)$, which is linear in the parameters $(c,J,H)$, so ordinary least squares on the features $(1,x_i,x_ix_j)$ estimates its $L^2(P_X)$ projection when the model is misspecified.

\begin{proposition}\label{prop:gauss}
Let $X\sim N(\mu,\Sigma_X)$ with $\Sigma_X$ nonsingular, and let $m$ be twice weakly differentiable with $\E|m(X)|^2$, $\E\|\nabla m(X)\|$ and $\E\|\nabla^2m(X)\|$ finite. The best quadratic approximation of $m$ in $L^2(P_X)$ has
\[
  J^*=\E[\nabla m(X)],\qquad H^*=\E[\nabla^2 m(X)].
\]
\end{proposition}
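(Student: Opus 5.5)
The approach is to work in the orthonormal Hermite basis of $L^2(P_X)$ and to use Stein's lemma (Gaussian integration by parts). The key fact is that the Hermite polynomials of degree at most two span the same space as the quadratic features. The $L^2$ projection onto quadratics therefore simply keeps the degree-$0$, $1$ and $2$ Hermite coefficients of $m$, and Stein's lemma expresses these coefficients as expectations of derivatives of $m$.

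\textbf{Step 1 (reduction).} Standardize with $Z=\Sigma_X^{-1/2}(X-\mu)$ and $\tilde m(z)=m(\mu+\Sigma_X^{1/2}z)$. A quadratic in $x$ is a quadratic in $z$, with $\tilde J=\Sigma_X^{1/2}J$ and $\tilde H=\Sigma_X^{1/2}H\Sigma_X^{1/2}$. By the chain rule, $\nabla\tilde m=\Sigma_X^{1/2}\nabla m$ and $\nabla^2\tilde m=\Sigma_X^{1/2}\nabla^2 m\,\Sigma_X^{1/2}$. Both sides of the claimed identities transform identically, so it suffices to treat $\mu=0$ and $\Sigma_X=I$.

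\textbf{Step 2 (orthogonal basis).} For $Z\sim N(0,I)$, the functions $1$, $z_i$, $z_iz_j$ ($i<j$) and $(z_i^2-1)/\sqrt2$ are orthonormal. Their span is all quadratics. Rewrite the model as
\[
c'+J^\top z+\tfrac12 z^\top Hz=\bigl(c'+\tfrac12\tr H\bigr)+J^\top z+\tfrac12\bigl(z^\top Hz-\tr H\bigr).
\]
Matching coefficients of the orthonormal basis gives the projection coefficients:
\begin{itemize}
\item $J^*_i=\E[m(Z)Z_i]$;
\item $H^*_{ij}=\E[m(Z)Z_iZ_j]$ for $i\neq j$;
\item $H^*_{ii}=\E[m(Z)(Z_i^2-1)]$.
\end{itemize}
For the last item, note that $\tfrac12H_{ii}(z_i^2-1)$ has norm-squared coefficient matching $\tfrac12 H_{ii}\cdot 2$ against $(z_i^2-1)$, whose second moment is $2$.

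\textbf{Step 3 (Stein).} Apply the identity $\E[g(Z)Z_i]=\E[\partial_ig(Z)]$ for weakly differentiable $g$ with $\E|g|,\E|\partial_i g|<\infty$.
\begin{itemize}
\item With $g=m$ this gives $J^*=\E\nabla m$.
\item With $g=mZ_j$ for $j\ne i$ it gives $\E[mZ_iZ_j]=\E[\partial_i m\,Z_j]=\E[\partial_{ij}m]$.
\item With $g=mZ_i$ it gives $\E[mZ_i^2]=\E[m]+\E[\partial_im\,Z_i]=\E m+\E\partial_{ii}m$, hence $H^*_{ii}=\E\partial_{ii}m$.
\end{itemize}

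\textbf{Main obstacle: integrability.} The expected obstacle is justifying Stein's lemma under the stated weak moment conditions. The intermediate functions $g=mZ_j$ and $\partial_im\,Z_j$ require $\E|\partial_i m\,Z_j|<\infty$, which is not directly assumed. The plan is to prove the one-dimensional identity by integrating $\phi'(z)=-z\phi(z)$ against a mollified and truncated $g$, then pass to the limit by dominated convergence. For the second-order identities, the plan is to avoid the intermediate function altogether: use the Hermite relation $\E[m\,H_\alpha(Z)]=\E[\partial^\alpha m]$ obtained from Rodrigues' formula $H_\alpha\phi=(-1)^{|\alpha|}\partial^\alpha\phi$. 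Integrating by parts twice against smooth compactly supported approximations of $m$, this needs only $\E|m|$, $\E\|\nabla m\|$ and $\E\|\nabla^2 m\|$ finite, with boundary terms controlled by the Gaussian tails.
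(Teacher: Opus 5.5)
Your proof is correct and follows essentially the paper's route: split the quadratics orthogonally by degree (the Hermite chaos up to order two), then evaluate the projection coefficients with first- and second-order Stein identities. The differences are minor. You whiten first so that the degree-two Hermite basis is orthonormal and the coefficients can be read off, whereas the paper stays in the original coordinates and solves the normal equations via $\operatorname{Cov}(u^\top Hu,u^\top Bu)=2\tr(H\Sigma_XB\Sigma_X)$. Your cutoff argument, pairing $m$ with $\partial^\alpha$ of the Gaussian density, also supplies an integrability justification that the paper leaves implicit.
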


\begin{proof}
Let $u=X-\mu$. The functions $1$, $u$, and $u^\top Bu-\tr(B\Sigma_X)$ ($B$ symmetric) span the quadratic polynomials and are mutually orthogonal in $L^2(P_X)$ across degrees, because odd Gaussian moments vanish. Hence the projection can be computed degree by degree. For the linear part, Stein's identity $\E[m(X)u]=\Sigma_X\E[\nabla m(X)]$ and $\E[uu^\top]=\Sigma_X$ give $J^*=\E[\nabla m]$. For the quadratic part, the normal equations for $\tfrac12(u^\top Hu-\tr H\Sigma_X)$ read, for all symmetric $B$,
\[
  \E\bigl[m(X)\bigl(u^\top Bu-\tr B\Sigma_X\bigr)\bigr]
  =\tfrac12\E\bigl[(u^\top Hu-\tr H\Sigma_X)(u^\top Bu-\tr B\Sigma_X)\bigr]
  =\tr(\Sigma_XH\Sigma_XB),
\]
using $\operatorname{Cov}(u^\top Hu,u^\top Bu)=2\tr(H\Sigma_XB\Sigma_X)$. The second-order Stein identity gives the left side as $\tr(\Sigma_X\E[\nabla^2m]\Sigma_XB)$. Hence $\Sigma_XH^*\Sigma_X=\Sigma_X\E[\nabla^2 m]\Sigma_X$.
\end{proof}

Proposition~\ref{prop:gauss} extends the familiar statement that, under Gaussian design, the slope of the linear projection is the average gradient \citep{Stein1981,Brillinger1982}. In particular, the off-diagonal entry $H^*_{jk}$ of a misspecified quadratic regression estimates the average cross-partial derivative $\E[\partial_j\partial_k m(X)]$, which gives a model-free reading of interaction coefficients.

\subsection{Kernel smoothing as covariate diffusion}

Let $K$ be the standard Gaussian density on $\R^d$ and $Z\sim K$ independent of $(X,Y)$. The population target of the Nadaraya--Watson estimator with bandwidth $h$ is
\[
  \tilde m_h(x)=\frac{\int K_h(x-u)\,m(u)f(u)\,du}{\int K_h(x-u)\,f(u)\,du}=\E\bigl[Y\mid X+hZ=x\bigr].
\]
This is the regression function after the covariates have been diffused by Brownian motion for time $\tau=h^2$. Theorem~\ref{thm:main} with $b=0$, $\Sigma=I$, $G=0$ and $c=0$ shows that $\tau\mapsto\tilde m_{\sqrt\tau}$ solves
\[
  \partial_\tau\tilde m=\tfrac12\Delta\tilde m+\nabla\log\tilde f_\tau\cdot\nabla\tilde m ,
\]
where $\tilde f_\tau$ is the density of $X+\sqrt\tau Z$. Expanding at $\tau=0$ recovers the well-known leading bias $h^2\bigl(\tfrac12\Delta m+\nabla\log f\cdot\nabla m\bigr)$ of the Nadaraya--Watson estimator \citep{FanGijbels1996}. The local linear estimator removes the transport part and has leading bias $\tfrac12h^2\tr\nabla^2m$ \citep{RuppertWand1994}. Its leading bias therefore vanishes wherever $m$ is harmonic, and it is exactly conditionally unbiased when $m$ is linear.

%=====================================================================
\section{Invariance of the unit interval}\label{sec:invariance}

When $Y_t\in\{0,1\}$, $m_t=\Pr(Y_t=1\mid X_t)$ is automatically in $[0,1]$. A closed model $\partial_t p=\tfrac12\tr(\Sigma\nabla^2p)+v\cdot\nabla p+R(p)$ obtained by specifying $G=R(m)$ should preserve this property. It does whenever $R$ vanishes at the endpoints.

\begin{proposition}\label{prop:max}
Let $\Sigma$ be positive semidefinite and bounded, $v$ bounded, and $R$ locally Lipschitz with $R(0)=R(1)=0$. If $0\le p_0\le1$, then bounded classical solutions satisfy $0\le p_t\le1$ for all $t$.
\end{proposition}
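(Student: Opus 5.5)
The plan is a comparison (maximum-principle) argument applied to the negative part of $p$ and, symmetrically, to the part of $p$ exceeding $1$. Set $L=\sup_{[-M,M+1]}|R'|$-type Lipschitz constant of $R$ on the range $[-\|p\|_\infty,\|p\|_\infty]$; this is finite because $p$ is bounded and $R$ is locally Lipschitz. Since $R(0)=0$, we have $R(p)=r(t,x)\,p$ with $r=R(p)/p$ (set $r=0$ where $p=0$), and $|r|\le L$. Thus $p$ solves the \emph{linear} equation $\partial_t p=\tfrac12\tr(\Sigma\nabla^2p)+v\cdot\nabla p+r\,p$ with bounded zeroth-order coefficient. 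Substituting $w=e^{-(L+1)t}p$ gives $\partial_t w=\tfrac12\tr(\Sigma\nabla^2w)+v\cdot\nabla w+(r-L-1)w$, where $r-L-1\le-1<0$. It suffices to show $w\ge0$.

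The local step is the sign of the Hessian at extrema, as the section heading promises. At an interior negative minimum $(t_0,x_0)$ of $w$ with $t_0>0$, we have $\nabla w=0$ and $\nabla^2w\succeq0$. Since $\Sigma\succeq0$, $\tr(\Sigma\nabla^2w)\ge0$, and we also have $\partial_tw\le0$. On the other hand, the right-hand side is at least $(r-L-1)w>0$, which is a contradiction.

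The main obstacle is that covariate space is unbounded (or at least not compact), so the minimum need not be attained. I would handle this with the standard penalization. Fix $\varepsilon>0$, $T'<T$, and consider $w_\varepsilon=w+\varepsilon(1+|x|^2)+\varepsilon t$. Because $w$ is bounded, $w_\varepsilon\to+\infty$ as $|x|\to\infty$. Hence, if $w_\varepsilon$ is negative somewhere in $[0,T']\times\R^d$, it attains a negative minimum at some point with $t_0>0$, since $w_\varepsilon(0,\cdot)\ge0$. We then need $\partial_t w_\varepsilon-\tfrac12\tr(\Sigma\nabla^2 w_\varepsilon)-v\cdot\nabla w_\varepsilon-(r-L-1)(w_\varepsilon-\varepsilon(1+|x|^2)-\varepsilon t)\le 0$ at the minimum. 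The penalty contributes $\varepsilon-\varepsilon\tr\Sigma-2\varepsilon v\cdot x+(r-L-1)\varepsilon(1+|x|^2+t)$. The term $-2\varepsilon v\cdot x$ is dominated by $-(L+1-r)\varepsilon|x|^2\le-\varepsilon|x|^2$ up to a bounded constant, using boundedness of $v$ and $\Sigma$. If needed, I would instead take $e^{-Ct}$ with a larger $C$ so that $C$ absorbs $\|\Sigma\|_\infty+\|v\|_\infty^2$. Carrying out this bookkeeping gives a strict sign contradiction at the minimum. Letting $\varepsilon\to0$ yields $w\ge0$, so $p\ge0$. If instead $\mathcal X$ is a bounded domain with zero-flux boundary, I would either assume the minimum is not on the boundary or rely on the Hopf-type argument; I would state the result for $\R^d$ as in the proposition.

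For the upper bound, set $\tilde p=1-p$. It solves $\partial_t\tilde p=\tfrac12\tr(\Sigma\nabla^2\tilde p)+v\cdot\nabla\tilde p+\tilde R(\tilde p)$ with $\tilde R(s)=-R(1-s)$. This $\tilde R$ is locally Lipschitz with $\tilde R(0)=0$, and $\tilde p_0\ge0$. The same argument gives $\tilde p\ge0$, that is, $p\le1$. Only $R(0)=0$ is used for the lower bound and only $R(1)=0$ for the upper bound.
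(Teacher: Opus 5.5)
Your proposal is the paper's argument (the sign of $\tr(\Sigma\nabla^2 p)$ at an extremum, plus comparison with the constant solutions $0$ and $1$), except that you prove the comparison principle that the paper only cites: you linearize $R(p)=r\,p$ with $|r|\le L$, weight by $e^{-Ct}$, and penalize by $\varepsilon(1+|x|^2+t)$ to get an attained minimum on the non-compact domain, and this is correct. The one slip is a sign in the penalty bookkeeping: put $-(r-C)\,w_\varepsilon$ rather than $-(r-C)\,w$ in the operator, so that the penalty contributes $\varepsilon-\varepsilon\tr\Sigma-2\varepsilon v\cdot x+(C-r)\,\varepsilon(1+|x|^2+t)\ge 2\varepsilon$ once $C\ge L+1+\|\tr\Sigma\|_\infty+\|v\|_\infty^2$; this contradicts the strict negativity of the left-hand side at a negative minimum, whereas with your factor $(r-L-1)$ the quadratic term has the wrong sign and gives no contradiction.
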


\begin{proof}[Sketch]
At an interior maximum of $p_t$, $\nabla p_t=0$ and $\nabla^2p_t\preceq0$. Since $\Sigma\succeq0$, $\tr(\Sigma\nabla^2p_t)=\tr(\Sigma^{1/2}\nabla^2p_t\,\Sigma^{1/2})\le0$, so neither the diffusion nor the transport term can raise a maximum; the same holds for minima. The constants $0$ and $1$ are solutions, and the comparison principle for parabolic equations \citep{ProtterWeinberger1967} yields the claim.
\end{proof}

For $K\ge3$ types with frequencies $p^{(k)}$ and reaction terms $R_k$ satisfying $R_k=0$ when $p^{(k)}=0$ and $\sum_kR_k=0$, the probability simplex is invariant provided all components share the same diffusion matrix. Then $\sum_kp^{(k)}$ solves a linear equation with constant solution $1$, and each component stays nonnegative. With unequal diffusion coefficients, convex invariant regions of the reaction need not be preserved \citep{ChuehConleySmoller1977}. Two practical cautions apply. Explicit time discretizations need a step-size restriction (for the heat part, $\Delta t\lesssim\Delta x^2/(2d\kappa)$) to inherit the property. Unconstrained estimates of $m_t$, such as local quadratic regression, can leave $[0,1]$, which is one reason to work on the logit scale.

%=====================================================================
\section{Bistable replicator dynamics on covariate space}\label{sec:replicator}

\subsection{The closed equation}

Let each unit play strategy $A$ ($Y=1$) or $B$ ($Y=0$). In a two-strategy coordination game, the payoff difference between $A$ and $B$ is affine in the frequency $p$ of $A$ and changes sign at an interior point $a\in(0,1)$. Replicator dynamics \citep{HofbauerSigmund1998} then give the bistable reaction $R(p)=r\,p(1-p)(p-a)$. Suppose units imitate others whose covariates are close to their own, and that individual switching rates depend on $x$ only through the local frequency $p_t(x)=m_t(x)$. Then $G=R(m)$ and Theorem~\ref{thm:main} gives the closed equation
\begin{equation}\label{eq:nagumo}
  \partial_t p=\tfrac12\tr(\Sigma\nabla^2p)+v\cdot\nabla p+r\,p(1-p)(p-a),
\end{equation}
where $v=\bar b$ is the reverse-time drift. Non-local imitation with kernel $\kappa(\cdot)$, $\int\kappa(x-z)\bigl(p(z)-p(x)\bigr)dz$, leads to the same diffusion term at second order in the kernel width. With constant $\Sigma=2\kappa I$ and $v=0$, \eqref{eq:nagumo} is the Nagumo (bistable Allen--Cahn) equation \citep{McKean1970,AronsonWeinberger1978}.

\subsection{Travelling fronts are logistic}

\begin{proposition}\label{prop:wave}
Let $\Sigma$ and $v$ be constant, $n\in\R^d$ a unit vector, and $D_n=\tfrac12n^\top\Sigma n>0$. The function
\[
  p(t,x)=\Bigl(1+\exp\Bigl\{-\frac{n^\top x-ct}{w}\Bigr\}\Bigr)^{-1}
\]
solves \eqref{eq:nagumo} if and only if
\[
  w=\sqrt{2D_n/r},\qquad c=\sqrt{2D_nr}\,\bigl(a-\tfrac12\bigr)-v^\top n .
\]
\end{proposition}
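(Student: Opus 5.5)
The plan is to reduce \eqref{eq:nagumo} to an ordinary differential equation in the single travelling variable and then match coefficients of a polynomial in $p$. Set $\xi=(n^\top x-ct)/w$, so that $p=\ell(\xi)$ with $\ell(\xi)=(1+e^{-\xi})^{-1}$. The logistic function satisfies
\[
\ell'=\ell(1-\ell),\qquad \ell''=\ell(1-\ell)(1-2\ell).
\]
Since $\Sigma$ and $v$ are constant, the chain rule gives
\[
\partial_t p=-\tfrac{c}{w}\,\ell',\qquad \nabla p=\tfrac1w\,\ell'\,n,\qquad \nabla^2p=\tfrac1{w^2}\,\ell''\,nn^\top .
\]
Hence $\tfrac12\tr(\Sigma\nabla^2p)=\tfrac{D_n}{w^2}\,\ell''$ and $v\cdot\nabla p=\tfrac{v^\top n}{w}\,\ell'$.

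Substituting into \eqref{eq:nagumo} and dividing by $\ell(1-\ell)$, which is strictly positive, the equation becomes an identity in $p=\ell\in(0,1)$:
\[
-\frac{c}{w}=\frac{D_n}{w^2}(1-2p)+\frac{v^\top n}{w}+r(p-a).
\]
Both sides are affine in $p$. As $(t,x)$ range over $\R\times\R^d$ (note $n\neq0$), $\xi$ covers $\R$, so $p$ covers the whole interval $(0,1)$. An affine identity on an interval holds if and only if the coefficients agree.

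\textbf{Coefficient of $p$.} Matching gives $-2D_n/w^2+r=0$, i.e.\ $w^2=2D_n/r$. Here I take $w>0$ and $r>0$ as implicit in the statement; with $w<0$ one obtains the reflected front, which I will mention as excluded by the sign convention.

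\textbf{Constant term.} Matching gives
\[
-\frac{c}{w}=\frac{D_n}{w^2}+\frac{v^\top n}{w}-ra .
\]
Multiplying by $-w$ and using $D_n/w=rw/2$ yields
\[
c=raw-\tfrac12 rw-v^\top n=rw\bigl(a-\tfrac12\bigr)-v^\top n .
\]
Finally, $rw=\sqrt{2D_nr}$, which gives the stated formula for $c$.

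Each step in this chain is an equivalence, so both directions of the "if and only if" follow at once. There is no real obstacle; the only point needing care is justifying the coefficient matching. This rests on the range of $p$ being the full interval $(0,1)$, so the identity holds on infinitely many points, together with the positivity of $\ell(1-\ell)$ used in the division.
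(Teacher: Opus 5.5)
Your proposal is correct and follows the paper's proof essentially step for step: substitute the travelling variable, use $\phi'=\phi(1-\phi)$ and $\phi''=\phi(1-\phi)(1-2\phi)$, divide by $\phi(1-\phi)$, and match the coefficients of the resulting affine function of $p$. The paper leaves two points unstated that you make explicit. First, $w>0$ and $r>0$ are implicit; otherwise $w=-\sqrt{2D_n/r}$ also gives a solution, the reflected front, in which the sign of the $\sqrt{2D_nr}\,(a-\tfrac12)$ term in $c$ is flipped. Second, coefficient matching requires $p$ to range over an interval. Both remarks make the ``if and only if'' precise.
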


\begin{proof}
Write $p=\phi(z)$ with $z=(n^\top x-ct)/w$ and $\phi$ the logistic function, so $\phi'=\phi(1-\phi)$ and $\phi''=\phi(1-\phi)(1-2\phi)$. Then $\partial_tp=-(c/w)\phi'$, $\tfrac12\tr(\Sigma\nabla^2p)=(D_n/w^2)\phi''$, and $v\cdot\nabla p=(v^\top n/w)\phi'$. Dividing \eqref{eq:nagumo} by $\phi(1-\phi)$ leaves an affine function of $\phi$ that vanishes identically if and only if $2D_n/w^2=r$ and $-(c+v^\top n)/w=(D_n/w^2)-ra$, which is equivalent to the stated $w$ and $c$.
\end{proof}

\begin{corollary}[Identification by logistic regression]\label{cor:logit}
On a travelling front,
\[
  \logit p(t,x)=\beta_0+\beta^\top x+\gamma t,\qquad
  \beta=\frac{n}{w},\quad \gamma=-\frac{c}{w}=-r\bigl(a-\tfrac12\bigr)+v^\top\beta .
\]
Hence, from the coefficients of a single logistic regression of $Y$ on $(X,t)$,
\begin{enumerate}[label=(\roman*),leftmargin=2em]
  \item $\beta/\|\beta\|$ is the normal of the front;
  \item $D_n/r=1/(2\|\beta\|^2)$; with isotropic $\Sigma=2\kappa I$, $\kappa/r=1/(2\|\beta\|^2)$;
  \item if $v=0$, $\gamma=-r(a-\tfrac12)$, so $\gamma>0$ if and only if $a<\tfrac12$, i.e.\ the risk-dominant strategy $A$ invades.
\end{enumerate}
Because the logistic model is correctly specified on the front, the maximum likelihood estimator of $(\beta_0,\beta,\gamma)$ inherits the usual consistency and asymptotic efficiency. The parameters $r$, $a$ and $\Sigma$ are not separately identified from a single front; only the combinations above are.
\end{corollary}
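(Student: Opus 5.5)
The plan is to read Corollary~\ref{cor:logit} directly off Proposition~\ref{prop:wave} and then translate each coefficient. On a travelling front, $p=\phi(z)$ with $z=(n^\top x-ct)/w$ and $\phi$ the logistic function. Since $\logit\circ\phi$ is the identity,
\[
  \logit p(t,x)=\frac{n^\top x}{w}-\frac{c}{w}\,t .
\]
This is affine in $(x,t)$, with $\beta=n/w$, $\gamma=-c/w$ and intercept $\beta_0=0$. A general $\beta_0$ comes from allowing a shift $n^\top x-ct-z_0$, which also solves \eqref{eq:nagumo} by translation invariance because the coefficients are constant. Substituting the values of $w$ and $c$ from Proposition~\ref{prop:wave},
\[
  \gamma=-\frac{\sqrt{2D_nr}\,(a-\tfrac12)}{\sqrt{2D_n/r}}+\frac{v^\top n}{w}=-r\bigl(a-\tfrac12\bigr)+v^\top\beta ,
\]
which is the displayed formula.

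The three items then follow in order.

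\begin{enumerate}[label=(\roman*),leftmargin=2em]
  \item Because $n$ is a unit vector and $w>0$, we have $\|\beta\|=1/w$ and $\beta/\|\beta\|=n$.
  \item From $w^2=2D_n/r$ we get $D_n/r=w^2/2=1/(2\|\beta\|^2)$. When $\Sigma=2\kappa I$, $D_n=\kappa$.
  \item When $v=0$, $\gamma=-r(a-\tfrac12)$ and $r>0$, so $\gamma>0$ exactly when $a<\tfrac12$. For the interpretation, note that with $\beta=n/w$ the region where $A$ dominates is $n^\top x>ct$. If $\gamma>0$ then $c<0$, so this region grows. By the standard fact for the coordination game, $a<\tfrac12$ is the risk-dominance condition for $A$.
\end{enumerate}

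For the statistical claims, the key observation is that on the front $\Pr(Y=1\mid X=x,t)=p(t,x)$ is exactly logistic-linear in $(1,x,t)$. The logistic model is therefore correctly specified, and standard MLE theory gives consistency and asymptotic efficiency. This requires a nondegenerate design, meaning the regressors $(1,X,t)$ are not collinear and the information matrix is positive definite. I would state that condition rather than prove it.

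Non-identification is the step needing the most care, and I would handle it with an explicit construction. The observable law of $Y$ given $(X,t)$ depends on the primitives only through $(\beta_0,\beta,\gamma)$. Any two parameter sets $(r,a,\Sigma)$ and $(r',a',\Sigma')$ with the same $n$, the same $D_n/r$, and the same $r(a-\tfrac12)-v^\top\beta$ therefore give identical fronts. For example, scale $\Sigma$ and $r$ by a common factor $\lambda$, and choose $a'$ so that $r'(a'-\tfrac12)=r(a-\tfrac12)$. This is possible while keeping $a'\in(0,1)$ for $\lambda$ near $1$. Such a family exhibits observational equivalence, so only the stated combinations are identified.
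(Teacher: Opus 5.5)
Your proposal is correct and takes the same route as the paper, which gives no separate proof and treats the corollary as an immediate consequence of Proposition~\ref{prop:wave}: invert the logistic link, substitute $w$ and $c$, and read off the coefficients. Your additions are sound elaborations of points the paper leaves implicit: the general intercept via translation invariance, the nondegenerate-design condition for the MLE claims, and the explicit $\lambda$-scaling family (with $a'$ adjusted) that exhibits the non-identification.
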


Bistable fronts are asymptotically stable and attract a large class of initial conditions \citep{FifeMcLeod1977}, which justifies fitting the model after a transient (Section~\ref{sec:simulations}).

\begin{remark}[Curved fronts and the Hessian of the index]
If the front is curved, one may add quadratic terms to the linear index, $\eta=\beta_0+\gamma t+\beta^\top x+\tfrac12x^\top H_\eta x$. The mean curvature of the level set $\{\eta=\text{const}\}$ is
\[
  -\nabla\cdot\frac{\nabla\eta}{\|\nabla\eta\|}=-\frac{\tr H_\eta-\nu^\top H_\eta\nu}{\|\nabla\eta\|},\qquad \nu=\frac{\nabla\eta}{\|\nabla\eta\|},
\]
the trace of the Hessian restricted to the tangent space. For the isotropic Allen--Cahn equation, the normal velocity of a slowly curved front is corrected by a multiple of this curvature. The Hessian thus enters twice: as the diffusion operator of the equation, and as the geometry of the logistic index.
\end{remark}

%=====================================================================
\section{Identification and estimation}\label{sec:estimation}

\subsection{What each design identifies}

\paragraph{Panel data.} If the same units are observed at times $t_0<t_1<\dots$, the covariate coefficients $b$ and $\Sigma$ are identified from covariate increments alone, as conditional mean and covariance rates of $X_{t_{k+1}}-X_{t_k}$ given $X_{t_k}$ \citep{ComteGenonCatalot2020}. The density $f_t$, the regression function $m_t$ and its derivatives are identified at each time. Equation~\eqref{eq:main} then identifies the source term,
\begin{equation}\label{eq:residual}
  G_t=\partial_tm_t-\tfrac12\tr(\Sigma\nabla^2m_t)-\bar b_t\cdot\nabla m_t+\frac{1}{f_t}\dv(f_t\bar c_t),
\end{equation}
without tracking outcomes over time. When outcomes are also tracked, $G_t$ is directly estimable as $\E[(Y_{t_{k+1}}-Y_{t_k})/\Delta t\mid X_{t_k}=x]$, and \eqref{eq:main} becomes a testable restriction linking separately estimated objects.

\paragraph{Repeated cross-sections.} If each time point has its own sample of units, only the marginal laws of $(X_t,Y_t)$ are identified. The density $f_t$ constrains $b$ and $\Sigma$ only through the Fokker--Planck equation. It is known from trajectory inference that snapshots do not identify divergence-free components of the drift \citep{Weinreb2018}. Equation~\eqref{eq:main} then identifies $G$ only up to the corresponding ambiguity in $\bar b$, unless the covariate dynamics are restricted parametrically (for instance a gradient drift or a known $\Sigma$).

\subsection{A two-stage procedure}

\begin{enumerate}[leftmargin=2em]
  \item \textbf{Covariate dynamics.} Estimate $b$ and $\Sigma$ by local regression of increments on $X$, and $f_t$ and $\nabla\log f_t$ by kernel methods; form $\hat{\bar b}_t$ via \eqref{eq:reversedrift}.
  \item \textbf{Regression field.} At each time, estimate $m_t$, $\nabla m_t$ and $\nabla^2m_t$ by local polynomial regression of degree three (odd degree is recommended for second derivatives; \citealp{FanGijbels1996}). On a binary outcome, a local logistic fit is preferable (Section~\ref{sec:invariance}).
  \item \textbf{Source and closure.} Compute the residual \eqref{eq:residual}. Either regress it on a dictionary of candidate reaction terms $\{m,\,m(1-m),\,m^2(1-m),\,u_t,\dots\}$ with sparsity penalties, in the spirit of PDE discovery \citep{Brunton2016,Rudy2017}, or fit a parametric closure by least squares.
\end{enumerate}

Three issues deserve attention. First, the $\nu$-th derivative of a $p$-smooth regression function can be estimated at best at rate $n^{-(p-\nu)/(2p+d)}$ \citep{Stone1982}, so Hessian estimates are noisy and the dimension of the covariate space should be small. This argues for reducing covariates to the few directions that matter, preferably with supervised dimension reduction rather than principal components. Second, the regressors in step 3 are estimated, which induces attenuation bias in the coefficients of diffusion-type terms. Sample splitting and integrated (weak) forms of \eqref{eq:main} over time and against test functions mitigate both this bias and the noise of finite-difference time derivatives. Third, \eqref{eq:main} holds where $f_t>0$; near the edge of the support, $\nabla\log f_t$ is large and boundary conditions matter. Although $f_t$ may be positive everywhere for a diffusion, the region where $m_t$ is \emph{estimable} grows only as fast as the data spread, roughly like $\sqrt t$. A formal asymptotic theory for the composite estimator is left for future work.

%=====================================================================
\section{Simulations}\label{sec:simulations}

\subsection{Checking the evolution equation}

We simulated $N=2\times10^6$ units with one-dimensional stationary Ornstein--Uhlenbeck covariates $dX=-\theta X\,dt+\sqrt s\,dW$ ($\theta=s=1$), outcomes $dY=(\sin 2X-\lambda Y)\,dt$ with $\lambda=0.5$, and $Y_0=\cos X_0+0.3\varepsilon$, using Euler--Maruyama steps of size $10^{-3}$. At $t=0.48,0.50,0.52$ we estimated $m_t$, $m_t'$ and $m_t''$ by local quadratic regression (Gaussian kernel, $h=0.12$) on a grid of $25$ points in $[-1.2,1.2]$. We compared the central difference $\partial_t\hat m_t$ with the right-hand side of \eqref{eq:main}, for which $\bar b(x)=-\theta x$ (Example~\ref{ex:special}(b)) and $G=\sin 2x-\lambda m$.

Figure~\ref{fig:master} shows close agreement. Over $|x|\le1$, the root mean square discrepancy is $0.025$, compared with a root mean square of $0.44$ for $\partial_t\hat m_t$ itself. Omitting the density correction $\Sigma\nabla\log f$, i.e.\ transporting with the forward drift $-b$, raises the discrepancy to $0.34$. Dropping transport altogether gives $0.18$. The correction term is therefore essential: the regression function is transported along the reversed, not the forward, covariate dynamics.

\begin{figure}[t]
  \centering
  \includegraphics[width=0.66\textwidth]{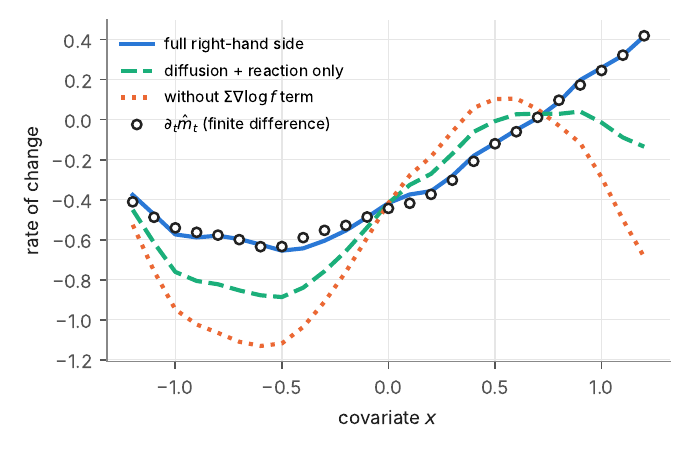}
  \caption{Time derivative of the estimated regression function (circles) and three versions of the right-hand side of \eqref{eq:main}: full (solid), without the transport term (dashed), and with the forward drift instead of the reverse-time drift (dotted). Ornstein--Uhlenbeck covariates, linear outcome dynamics, $N=2\times10^6$.}
  \label{fig:master}
\end{figure}

\subsection{Logistic identification of a bistable front}

We solved \eqref{eq:nagumo} with $d=2$, $\Sigma=2\kappa I$, $\kappa=0.5$, $r=1$, $a=0.3$, $v=0$ and front normal $n=(0.6,0.8)$. The theoretical values are $\kappa/r=0.5$ and $r(a-\tfrac12)=-0.2$. The initial condition was a steep, non-logistic planar step with values in $[0.05,0.95]$. We used an explicit finite-difference scheme in the normal coordinate with a stable time step. The solution stayed in $[0,1]$, as Proposition~\ref{prop:max} requires, and converged to the logistic front of Proposition~\ref{prop:wave} (Figure~\ref{fig:nagumo}). At each time on a grid of spacing $0.5$, we drew $400$ covariate vectors uniformly from $[-6,6]^2$ and binary outcomes from $p(t,X)$, and fitted a single logistic regression of $Y$ on $(X,t)$. Table~\ref{tab:nagumo} reports means and standard deviations over $300$ replications.

\begin{figure}[t]
  \centering
  \includegraphics[width=0.66\textwidth]{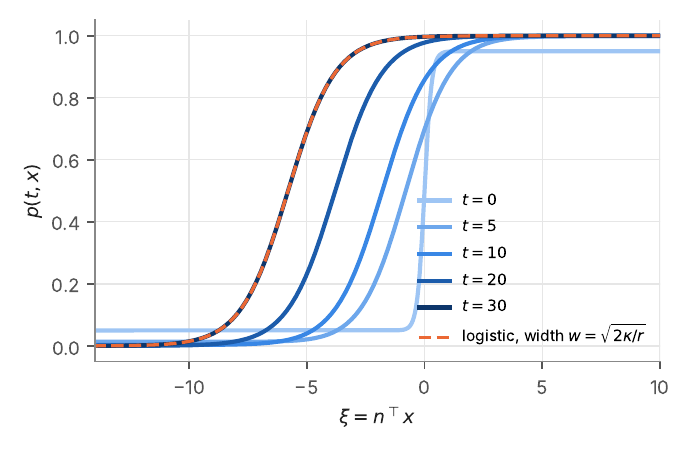}
  \caption{Solution of the bistable equation \eqref{eq:nagumo} along the front normal, from a non-logistic initial step to the logistic travelling front (dashed) of Proposition~\ref{prop:wave}.}
  \label{fig:nagumo}
\end{figure}

\begin{table}[t]
  \centering
  \caption{Logistic-regression estimates of the identified combinations (mean, with standard deviation in parentheses, over 300 replications).}
  \label{tab:nagumo}
  \begin{tabular}{lcccc}
    \toprule
    & $\kappa/r$ & $r(a-\frac12)$ & $n_1$ & $n_2$\\
    \midrule
    Truth & $0.500$ & $-0.200$ & $0.600$ & $0.800$\\
    Window $t\in[15,25]$ & $0.499\ (0.026)$ & $-0.200\ (0.014)$ & $0.601\ (0.011)$ & $0.799\ (0.008)$\\
    Window $t\in[0,10]$ & $0.487\ (0.024)$ & $-0.183\ (0.011)$ & $0.595\ (0.007)$ & $0.804\ (0.005)$\\
    \bottomrule
  \end{tabular}
\end{table}

After the transient, the estimates are centred on the truth. A window that includes the transient biases the invasion-speed combination $r(a-\tfrac12)$ by about $8\%$, roughly one and a half standard deviations, because the front has not yet reached its asymptotic speed.

%=====================================================================
\section{Fluctuating characteristics or imitation?}\label{sec:imitation}

When adoption of a strategy appears to spread across covariate space, two mechanisms are plausible. Either the characteristics of units fluctuate, so that adopters drift into new regions of covariate space, or units imitate others with similar characteristics. This section shows that the two mechanisms act through the same operator on the regression field, explains what separates them, and gives a test.

\subsection{Imitation acts like covariate fluctuation}

For a symmetric positive semidefinite matrix $V$, define the operator
\[
  \mathcal S_V[p]=\tfrac12\tr(V\nabla^2p)+V\nabla\log f\cdot\nabla p .
\]
With constant $\Sigma$, the covariate part of \eqref{eq:main} is $\mathcal S_\Sigma[m]-b\cdot\nabla m$.

Suppose that, in addition to the reaction $R$, each unit at rate $\nu_I$ draws another unit with probability proportional to $K(x-z)$, where $x$ and $z$ are their covariates, and copies its strategy. Here $K$ is a symmetric kernel with covariance $V$. The unit is drawn among existing units, so the local frequency it sees is
\[
  \tilde p(x)=\frac{\int K(x-z)\,p(z)f(z)\,dz}{\int K(x-z)\,f(z)\,dz},
\]
and the contribution of imitation to the source term is $G_I=\nu_I(\tilde p-p)$.

\begin{proposition}\label{prop:imit}
As $V\to0$, $\tilde p-p=\mathcal S_V[p]+O(\|V\|^2)$. Consequently, with constant $\Sigma$ and $b$ linear,
\begin{equation}\label{eq:imit}
  \partial_tp=\mathcal S_{\Sigma+\Sigma_I}[p]-b\cdot\nabla p+R(p)+O(\nu_I\|V\|^2),\qquad \Sigma_I=\nu_IV .
\end{equation}
\end{proposition}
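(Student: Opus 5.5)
Write $K=K_V$ as the rescaling $K_V(u)=\det(V)^{-1/2}k(V^{-1/2}u)$ of a fixed symmetric density $k$ with identity covariance and finite fourth moments. (If $V$ is singular, restrict to its range; nothing below changes.) The plan is to Taylor-expand numerator and denominator of $\tilde p$ separately in $u=z-x$, in the same way as the Nadaraya--Watson discussion in Section~\ref{sec:hessian}. That discussion is in fact the Gaussian special case of the same computation.

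Put $g=pf$. Since $K$ is symmetric, odd moments vanish. Moreover $\int K(u)u_iu_j\,du=V_{ij}$ and fourth moments are $O(\|V\|^2)$. Hence
\[
  \int K(x-z)g(z)\,dz=g+\tfrac12\tr(V\nabla^2 g)+O(\|V\|^2),\qquad
  \int K(x-z)f(z)\,dz=f+\tfrac12\tr(V\nabla^2 f)+O(\|V\|^2),
\]
provided $f,p$ are $C^4$ with bounded derivatives near $x$ (or the kernel has compact or rapidly decaying support, so the remainder is controlled uniformly on compacts where $f>0$). Dividing and expanding $1/(1+\epsilon)$ gives
\[
  \tilde p=p+\frac{\tr(V\nabla^2(pf))-p\,\tr(V\nabla^2 f)}{2f}+O(\|V\|^2).
\]
The product rule, $\nabla^2(pf)=f\nabla^2p+p\nabla^2 f+\nabla p\,\nabla f^\top+\nabla f\,\nabla p^\top$, together with the symmetry of $V$, turns the middle term into $\tfrac12\tr(V\nabla^2p)+V\nabla\log f\cdot\nabla p=\mathcal S_V[p]$. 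This step is the same algebra as in the proof of Theorem~\ref{thm:main} with constant $\Sigma=V$.

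For \eqref{eq:imit}, apply Theorem~\ref{thm:main} with $G=R(p)+\nu_I(\tilde p-p)$ and $c=0$. Imitation changes $Y$ only and does not move $X$, so $f_t$ still solves the Fokker--Planck equation of the covariate diffusion alone. With constant $\Sigma$, the term $\dv\Sigma$ vanishes, so the covariate part of \eqref{eq:main} is $\tfrac12\tr(\Sigma\nabla^2p)+\Sigma\nabla\log f\cdot\nabla p-b\cdot\nabla p=\mathcal S_\Sigma[p]-b\cdot\nabla p$. Substituting the expansion gives $\nu_I(\tilde p-p)=\mathcal S_{\nu_IV}[p]+O(\nu_I\|V\|^2)$. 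Since $V\mapsto\mathcal S_V$ is linear, $\mathcal S_\Sigma+\mathcal S_{\Sigma_I}=\mathcal S_{\Sigma+\Sigma_I}$, which is \eqref{eq:imit}.

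Linearity of $b$ is not needed for the algebra. I would use it only to guarantee Gaussian-type (OU) densities, for which $f_t$ is smooth with $\nabla\log f$ of linear growth, so the uniform remainder bounds hold.

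The main obstacle is justifying the $O(\|V\|^2)$ remainder rigorously after division by the smoothed density, and uniformly in $t$. This needs $f$ bounded below on the region considered and control of fourth derivatives of $pf$ and $f$. I would state it locally uniformly on compact subsets of $\mathcal X$ and, for the OU case, verify the derivative bounds from the explicit Gaussian density.
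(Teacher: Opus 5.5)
Your proposal is correct and follows essentially the same route as the paper. You expand the numerator and denominator of $\tilde p$ to second order using the symmetric kernel's moments, divide, use the product rule to obtain $\mathcal S_V[p]$, and add $\nu_I(\tilde p-p)$ to the source term of Theorem~\ref{thm:main} with constant $\Sigma$. Your explicit scaling family $K_V$, the observation that imitation leaves $f_t$ unchanged, and the remark that linearity of $b$ plays no role in the algebra are all sound refinements of the paper's terser argument.
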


\begin{proof}
For a smooth $g$, $(K*g)(x)=g(x)+\tfrac12\tr(V\nabla^2g(x))+O(\|V\|^2)$. Apply this to $g=pf$ in the numerator and $g=f$ in the denominator, and expand the ratio:
\[
  \tilde p=p+\frac{\tr(V\nabla^2(pf))-p\tr(V\nabla^2f)}{2f}+O(\|V\|^2)=p+\tfrac12\tr(V\nabla^2p)+\frac{\nabla p^\top V\nabla f}{f}+O(\|V\|^2).
\]
Add $\nu_I(\tilde p-p)$ to the source term of \eqref{eq:main}. (We verified the expansion symbolically in one dimension.)
\end{proof}

Proposition~\ref{prop:imit} says that imitation among similar units is \emph{observationally equivalent} to an extra covariate fluctuation with covariance $\Sigma_I$, including the density correction $V\nabla\log f$. The regression field alone, even observed perfectly, identifies only $\Sigma+\Sigma_I$. Panel data break the equivalence: the covariance $\Sigma$ of covariate increments is identified from the covariates alone (Section~\ref{sec:estimation}), so $\Sigma_I$ is identified as the excess diffusion of the regression field over $\Sigma$. Hence the null hypothesis of no imitation is
\[
  H_0:\ \Sigma_I=0 .
\]

\subsection{No separation on a travelling front}

The test needs one more condition. Take a logistic front $p=\phi(n^\top x/w)$ with uniform density. Then $\tfrac12n^\top Vn\,\partial^2_{nn}p=\frac{n^\top Vn}{2w^2}\bigl\{p(1-p)-2p^2(1-p)\bigr\}$, which lies in the span of the cubic reaction basis $\{p(1-p),\,p^2(1-p)\}$. On a front, therefore, any amount of diffusion can be traded against a change in the reaction coefficients. Even with $\Sigma$ known, $\Sigma_I$ is not identified when the reaction is an unknown bistable cubic: only the combinations of Corollary~\ref{cor:logit} are. Identification requires profiles in which $\nabla^2p$ is not a function of $p$, such as early transients or irregular seeding of initial adopters. This has a design implication: an experiment that seeds adopters at several places in covariate space is informative about imitation, and observing a mature front is not.

\subsection{A split-sample instrumental-variables test}

Second derivatives of an estimated regression field are noisy. They also carry smoothing bias of a different order from the field itself. We therefore work with $q_t=m_tf_t$ and $f_t$, whose Gaussian-kernel estimates $q_h=K_h*\hat q$ and $f_h=K_h*\hat f$ are linear in the data, so that derivatives commute with smoothing. Multiplying \eqref{eq:imit} by $f$ and using $f\,\mathcal S_V[m]=\tfrac12\{\tr(V\nabla^2q)-m\tr(V\nabla^2f)\}$ gives the $q$-form
\begin{equation}\label{eq:qform}
  \partial_tq=\mathcal L_X^*q+\tfrac12\bigl\{\tr(\Sigma_I\nabla^2q)-m\tr(\Sigma_I\nabla^2f)\bigr\}+f\,R(m).
\end{equation}
For the linear drift $b=-\Theta x$, convolution with a Gaussian kernel of covariance $h^2I$ satisfies $K_h*\nabla\!\cdot(xq)=\nabla\!\cdot(x\,q_h)+h^2\Delta q_h$ exactly. We use this to apply $\mathcal L_X^*$ to the smoothed field without bias. The test proceeds as follows.
\begin{enumerate}[leftmargin=2em]
  \item Estimate $\Theta$ and $\Sigma$ from covariate increments; for Ornstein--Uhlenbeck covariates, use the exact autoregressive transition.
  \item Split the units at random into halves $\mathcal A$ and $\mathcal B$. In each half and at each observation time, compute $q_h$, $f_h$ and their derivatives on a grid, the dependent variable $Z=\partial_tq_h-\hat{\mathcal L}_X^*q_h$ (central differences in time), the imitation regressor $\tfrac12\{\tr\nabla^2q_h-m_h\tr\nabla^2f_h\}$ (isotropic $V$; one regressor per entry otherwise), and the reaction regressors $f_hm_h^2(1-m_h)$ and $f_hm_h(1-m_h)$, where $m_h=q_h/f_h$.
  \item Regress $Z$ from half $\mathcal A$ on the regressors from $\mathcal A$, using the regressors from $\mathcal B$ as instruments. Swap the roles of the halves and average the two estimates. Because estimation noise is independent across halves, this removes the attenuation that noisy second derivatives would otherwise cause.
  \item Obtain a standard error by resampling units with replacement within each half. Reject $H_0$ when $\hat\Sigma_I/\widehat{\mathrm{se}}$ exceeds the one-sided normal critical value.
\end{enumerate}
The coefficients on the reaction regressors estimate $r$ and $-ra$, so $r$ and $a$ are recovered together with $\Sigma_I$.

\subsection{Agent-based experiments}

We generated synthetic panel data from an agent-based model with $N=10^5$ units and one-dimensional covariates. Covariates follow a stationary Ornstein--Uhlenbeck process with $\theta=0.25$ and $\Sigma=0.2$ (stationary variance $0.4$), simulated exactly with time step $0.01$; in one scenario they are frozen ($\Sigma=0$). Strategies evolve by two independent mechanisms.
\begin{itemize}[leftmargin=1.5em]
  \item \emph{Payoff-driven revision.} At rate $\nu_R=2$, a unit compares itself with a unit drawn uniformly among those within $\pm0.02$ in covariate space. If their strategies differ, it adopts the other's with probability $\tfrac12\pm\kappa(\tilde p-a)$, with $\kappa=0.25$ and $a=0.3$. To leading order this gives the bistable reaction $r\,p(1-p)(p-a)$ with $r=2\nu_R\kappa=1$; the window is so narrow that its own smoothing contributes $\nu_R(0.02)^2/6\approx10^{-4}$ to the diffusion.
  \item \emph{Imitation.} At rate $\nu_I$, a unit copies a unit drawn uniformly among those within $\pm0.3$, so $V=0.3^2/3=0.03$ and $\Sigma_I=0.03\,\nu_I$.
\end{itemize}
Initial adopters are seeded irregularly, $p_0(x)=0.5+0.3\sin(2\pi x/1.4+0.4)+0.15\cos(2\pi x/0.9)$ (clipped to $[0.03,0.97]$). All units are observed every $0.1$ time units up to $T=3$. Figure~\ref{fig:field} shows that the regression fields with and without imitation look much alike. We used bandwidth $h=0.1$, a grid of $23$ points on $[-1.1,1.1]$, observation times $t\in[0.2,2.9]$, and $49$ bootstrap draws. Each scenario was replicated $30$ times.

\begin{figure}[t]
  \centering
  \includegraphics[width=0.95\textwidth]{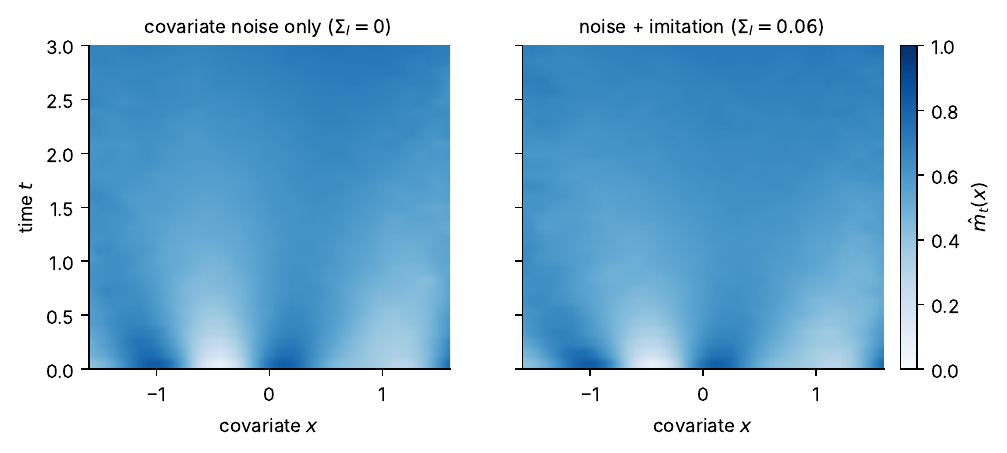}
  \caption{Estimated regression fields $\hat m_t(x)$ from the agent-based model with irregular seeding, without imitation (left) and with imitation of total strength $\Sigma_I=0.06$ (right). Covariate fluctuation is $\Sigma=0.2$ in both panels.}
  \label{fig:field}
\end{figure}

\begin{table}[t]
  \centering
  \caption{Split-sample IV test of imitation in the agent-based model ($30$ replications per scenario, $N=10^5$). Columns: true covariate fluctuation $\Sigma$ and imitation $\Sigma_I$; mean and Monte Carlo standard deviation of $\hat\Sigma_I$; mean bootstrap standard error; rejection rate of the one-sided $5\%$ test of $H_0:\Sigma_I=0$; mean estimates of $r$ (true $1$) and $a$ (true $0.3$). Scenario C$_{\rm narrow}$ uses an imitation window of $\pm0.2$ with rate $\nu_I=4.5$, giving the same $\Sigma_I$ as C with a smaller $V$.}
  \label{tab:imit}
  \small
  \begin{tabular}{lcccccccc}
    \toprule
    Scenario & $\Sigma$ & $\Sigma_I$ & mean $\hat\Sigma_I$ & MC sd & boot.\ se & reject & $\hat r$ & $\hat a$\\
    \midrule
    A & $0.2$ & $0$ & $0.001$ & $0.007$ & $0.010$ & $0.00$ & $1.02$ & $0.30$\\
    B$_{\rm weak}$ & $0.2$ & $0.03$ & $0.025$ & $0.007$ & $0.012$ & $0.73$ & $0.89$ & $0.25$\\
    B & $0.2$ & $0.06$ & $0.056$ & $0.008$ & $0.016$ & $1.00$ & $0.95$ & $0.28$\\
    C & $0$ & $0.06$ & $0.050$ & $0.002$ & $0.001$ & $1.00$ & $0.79$ & $0.23$\\
    C$_{\rm narrow}$ & $0$ & $0.06$ & $0.055$ & $0.003$ & $0.001$ & $1.00$ & $0.83$ & $0.26$\\
    \bottomrule
  \end{tabular}
\end{table}

Table~\ref{tab:imit} summarizes the results. The covariance of covariate increments is recovered essentially exactly ($\hat\Sigma=0.200$ when $\Sigma=0.2$). Under the null of no imitation (scenario A), $\hat\Sigma_I$ is centred at zero (mean $0.0013$), and the test rejected in none of the $30$ replications. The bootstrap standard error ($0.010$) exceeds the Monte Carlo standard deviation ($0.007$), so the test is conservative in this design. With covariate fluctuation and imitation both present, the test rejected in all replications for $\Sigma_I=0.06$ and in $73\%$ of replications for $\Sigma_I=0.03$, which is only $15\%$ of the covariate fluctuation. In those scenarios $\hat\Sigma_I$ is slightly below the truth (by about $0.004$--$0.005$). Without covariate fluctuation (scenario C), imitation is detected without exception, but $\Sigma_I$ is underestimated by about $17\%$; the bootstrap standard error is also smaller than the Monte Carlo standard deviation there. Reducing the kernel variance from $0.03$ to $0.013$ at a fixed $\Sigma_I$ (scenario C$_{\rm narrow}$) roughly halves the bias (to about $8\%$). This is consistent with the $O(\nu_I\|V\|^2)$ remainder of Proposition~\ref{prop:imit}: the imitation window is wide compared with the scale of the seeded profile, and without covariate fluctuation the field stays sharper.

\paragraph{Fronts versus seeded profiles.}
To illustrate the non-identification on fronts, we repeated scenario B $20$ times with the seeded profile and $20$ times with an initial logistic front ($p_0(x)=\phi((x-0.3)/0.25)$), all else equal. With seeding, $\hat\Sigma_I$ had mean $0.059$ and standard deviation $0.012$. With the front, the mean was $0.067$ but the standard deviation was $0.142$, about $12$ times larger, and the standard deviation of $\hat r$ rose from $0.22$ to $2.13$. Near a front, the diffusion and reaction regressors are almost collinear, as the analysis above predicts.

%=====================================================================
\section{Related work}\label{sec:related}

\paragraph{Transport, time reversal and Feynman--Kac.}
The conservation law \eqref{eq:q} and its mass-fraction form are standard in continuum mechanics. The reverse-time drift \eqref{eq:reversedrift} is due to \citet{Anderson1982} and \citet{HaussmannPardoux1986}. Conditional expectations of future quantities given current states solve backward Kolmogorov equations, which underlie regression-based Monte Carlo methods in finance \citep{LongstaffSchwartz2001}. Our equation is the forward-in-time counterpart for conditional expectations of current outcomes, and it is a backward equation for the reversed process (Corollary~\ref{cor:fk}).

\paragraph{Diffusion generative models.}
In diffusion models, the denoiser $\E[x_0\mid x_t=x]$ is the regression function of the clean signal on the noisy state. Its relation to the score is Tweedie's formula \citep{Efron2011}, and \citet{Daras2023} show that it satisfies a partial differential equation in $(t,x)$, which they use as a training constraint. This is the special case of Theorem~\ref{thm:main} with $G=0$ and a designed noising process. Our setting differs in that the covariate dynamics are estimated from data and the outcome has its own dynamics.

\paragraph{Single-cell dynamics.}
Fate probabilities in CellRank \citep{Lange2022} are conditional expectations of future labels given the current state, computed as absorption probabilities, i.e.\ discrete backward equations. Trajectory inference from snapshots \citep{Schiebinger2019} models the density of cells in expression space; \citet{Weinreb2018} delineate what snapshots can identify, which informs Section~\ref{sec:estimation}.

\paragraph{Heterogeneous agents and evolutionary dynamics.}
Forward equations for distributions over individual states are central in macroeconomics with heterogeneous agents \citep{Gabaix2016,Achdou2022}. Replicator--mutator equations place diffusion on a continuous trait space \citep{Kimura1965,AlfaroCarles2017}. Bistable reaction--diffusion fronts are classical \citep{McKean1970,FifeMcLeod1977,AronsonWeinberger1978}. These works study densities or trait distributions; we study regression functions over covariates.

\paragraph{Regression with differential operators.}
Spatial regression with PDE regularization \citep{Sangalli2021} uses differential operators on geographic domains as penalties. Sparse identification of dynamics \citep{Brunton2016,Rudy2017} selects PDE terms by regression. Our Section~\ref{sec:estimation} uses the latter idea on covariate space, with the diffusion and transport parts pinned down by covariate dynamics. The interpretation of misspecified regression coefficients as projections \citep{Buja2019} and as average derivatives \citep{Stein1981,Brillinger1982} motivates Section~\ref{sec:hessian}.

%=====================================================================
\section{Discussion}\label{sec:discussion}

Treating a cross-sectional regression function as a field on covariate space turns familiar regression objects, namely gradients, Hessians, smoothing bias and logistic indices, into the ingredients of an evolution equation whose coefficients have a mechanistic meaning. Several directions remain open. They include an asymptotic theory for the two-stage estimator with estimated Hessians; extensions to non-diffusive covariate dynamics such as jumps and births and deaths of units; closures beyond the mean-field case; endogenous covariate dynamics in which $b$ depends on outcomes, so that \eqref{eq:main} no longer closes in $m_t$; and applications to repeated cross-sections, where identification hinges on restrictions on the covariate drift. For the imitation test, a theory for the bootstrap with intrinsic population noise, higher-order corrections for wide imitation kernels, and multivariate covariates with anisotropic $V$ are natural next steps.

\section*{Statement on contributions and the use of AI}
This work grew out of a conversation between the author and Claude (Anthropic), an AI system, held on 27--28 October 2026 (version~1), and continued for version~2. The conversation began with an exercise on classical multidimensional scaling and moved on to regression. Because the boundary between human and machine contributions matters for a claim of priority, we record it here as precisely as the conversation allows. AI systems are not authors, and the author takes responsibility for all content.

\paragraph{Ideas originating with the author.}
\begin{enumerate}[leftmargin=2em]
  \item Going beyond the routine of reducing dimension by principal components and then fitting a linear regression, by also including a second-order (Hessian) term in the regression function so that interactions such as ``population $\times$ attention'' in game sales can be captured.
  \item Questions about the properties of standard estimators when the family of distributions is restricted to the linear regression model, and about why linear regression is legitimate when the true distribution lies outside the model. These questions arose in the author's prior notes, which organize regression by loss, family of distributions, regression class and comparison principle.
  \item Treating the trace of the Hessian as a Laplacian and forming, at each time, an evolution equation of the form $\partial_tY_t=\Delta Y_t+f(u)$.
  \item The central idea of this paper: take the \emph{covariates} of the regression at a fixed time as the space, regard the regression function at each time as a field on that space even though $\operatorname{Var}(\varepsilon\mid X)\neq0$, let time vary, and study the resulting heat-type equation, in which the Hessian appears.
  \item The observation that adding a diffusion term to a time-only ordinary differential equation is standard, and that taking the covariates as the space makes it natural for the range of covariates to spread over time.
  \item The motivating application: bistable replicator dynamics with a Hessian-based diffusion term on covariate space, combined with logistic regression of success rates. Also the recollection that the diffusion term does not push the frequency $p$ above one.
  \item The suggestion that, after multidimensional scaling, polynomial approximation with AIC-based selection of effective parameters can approach the truth in dimension $d>2$.
  \item The decision to write up the idea and post it to arXiv.
  \item (Version 2) The observation that adding gradient-flow components in covariate space to replicator dynamics yields an evolution equation for $\E[Y_t\mid\sigma(X_t)]$, and the request to test, on experimental data, whether spread in covariate space reflects fluctuating unit characteristics or imitation among similar units.
\end{enumerate}

\paragraph{Contributions of the AI system.}
\begin{enumerate}[leftmargin=2em]
  \item Formalizing the author's idea. This covers the setting (Assumptions~\ref{ass:cov}--\ref{ass:reg}), the derivation of Theorem~\ref{thm:main} (including the density-gradient term, state-dependent $\Sigma$ and the cross term), the observation that the transport coefficient is the reverse-time drift together with Corollary~\ref{cor:fk}, and the remark that weighting the Hessian by the covariance of covariate increments removes the need to choose a metric.
  \item Proposition~\ref{prop:gauss} and its proof; the connection between kernel smoothing and covariate diffusion in Section~\ref{sec:hessian}; the proof sketch of Proposition~\ref{prop:max} and the caveat for several types.
  \item Proposition~\ref{prop:wave}, Corollary~\ref{cor:logit} and the curvature remark, which turn the author's replicator--logistic idea into exact identification formulas.
  \item Critical comments that shaped the framing. Among them: the identity $\Delta=\tr\nabla^2$ is a definition and not a finding; principal components or classical scaling with all dimensions retained do not change a polynomial fit; polynomial approximation and AIC approach the regression function rather than the full distribution, and only under stated conditions.
  \item A literature search and novelty assessment, which identified the closely related work cited in Section~\ref{sec:related} and led to the framing of the contribution in Section~\ref{sec:intro}.
  \item Section~\ref{sec:estimation}, the symbolic verification of the derivations, the simulation code, the figures, and the drafting of this English manuscript.
  \item (Version 2) Framing the two mechanisms as a testable hypothesis; Proposition~\ref{prop:imit} (observational equivalence of imitation and covariate fluctuation) and the non-identification on travelling fronts; the $q$-form split-sample instrumental-variables test; and the agent-based experiments. The experimental data are synthetic, generated by the agent-based model of Section~\ref{sec:imitation}.
\end{enumerate}

\paragraph{Circumstances of writing.}
The author was skeptical of AI systems. Having disclosed the idea in the conversation, the author was concerned that it could be appropriated, and decided to post to arXiv promptly in order to establish a public record that the idea was the author's own and to protect both parties against later questions of priority. The author then instructed the AI system to typeset the manuscript immediately, and the present version was produced within the same conversation. Readers should judge the originality claims in Section~\ref{sec:intro} in light of the record above.

%=====================================================================

\end{document}